\newtheorem{thm}{Theorem}[section]
\newtheorem{pro}[thm]{Proposition}
\newtheorem{lem}[thm]{Lemma}
\newtheorem{cla}[thm]{Claim}
\newtheorem{cor}[thm]{Corollary}
\theoremstyle{definition}
\newtheorem{obs}[thm]{Observation}
\newtheorem{rem}[thm]{Remark}
\newtheorem{exa}[thm]{Example}
\newtheorem{defn}[thm]{Definition}
\newtheorem{conj}[thm]{Conjecture}
\newcommand{\een}{\end{enumerate}}
\newcommand{\blem}{\begin{lem}}
\newcommand{\elem}{\end{lem}}
\newcommand{\bcl}{\begin{cla}}
\newcommand{\ecl}{\end{cla}}
\newcommand{\ethm}{\end{thm}}
\newcommand{\bpr}{\begin{pro}}
\newcommand{\epr}{\end{pro}}
\newcommand{\bco}{\begin{cor}}
\newcommand{\eco}{\end{cor}}
\newcommand{\bcon}{\begin{conj}}
\newcommand{\econ}{\end{conj}}
\newcommand{\bde}{\begin{defn}}
\newcommand{\ede}{\end{defn}}
\newcommand{\bex}{\begin{exa}}
\newcommand{\eexa}{\end{exa}}
\newcommand{\bobs}{\begin{obs}}
\newcommand{\eobs}{\end{obs}}
\newcommand{\bexe}{\begin{exe}}
\newcommand{\eexe}{\end{exe}}
\begin{document}
%\large
\title{Balanced Abelian group valued functions on directed graphs.}
\author{Yonah Cherniavsky \\
%EndAName
Ariel University, Israel\\
yonahch@ariel.ac.il \and Avraham Goldstein \\
%EndAName
City University, New-York, USA\\
avraham.goldstein.nyc@gmail.com \and Vadim E. Levit \\
%EndAName
Ariel University, Israel\\
levitv@ariel.ac.il}
\date{}
\maketitle
\begin{abstract}
We discuss functions from the edges and vertices of a directed graph to an Abelian group. Such functions, when the sum of their values along any cycle is zero, are called balanced and form an Abelian group. We study this group in two cases: when we allowed to walk against the direction of an edge taking the opposite value of the function and when we are not allowed to walk against the direction.
\\ \\
\textbf{Keywords:} consistent graphs; balanced signed graphs; balanced labelings of graphs; gain graphs; weighted graphs.
\end{abstract}

\section{Introduction.}
Let $A$ be an Abelian group with the group operation denoted by $+$ and the identity element denoted by $0$.
Let $G$ be a graph. Roughly speaking, an $A$-valued function $f$ on vertices and/or edges of $G$ is called {\it balanced} if the sum of its values along any cycle of $G$ is $0$. Our cycles are not permitted to have repeating edges.

The study of balanced functions can be conducted in three cases:
\begin{enumerate}
\item The graph $G$ is directed with the set of vertices $V$ and the set of directed edges $E$. When traveling between the vertices, we are allowed to travel with or against the direction of the edges. The value of a function $f$ on $\bar{e}$, which represents traveling the edge $e$ against its direction, is equal to $-f(e)$. In this context, when the function is defined on edges only, the pair $(G,f)$ is called a network or a directed network. In this paper we shall call this {\it the flexible case}, meaning that the direction of an edge does not forbid us to walk against it. The notion of balanced functions on edges for the flexible case, for functions taking values only on the edges, is introduced in the literature under different names. Thus, for example, in \cite{BHN} the set of such functions is exactly $Im(d)$ and in \cite{KY}, in somewhat different language, that set is referred-to as the set of consistent graphs. In a rather common terminology introduced by Zaslavsky,~\cite{Z}, a pair of graph and such a function on edges of a graph is called a gain graph.

\item The graph $G$ is directed with the set of vertices $V$ and the set of directed edges $E$, but we are only allowed to travel with the direction of the edges. In this paper we shall call this {\it the rigid case}. When $f$ takes values only on the edges then in some literature, following Sierre, the flexible case is described as a particular instance of the rigid case by introducing the set $\mathbb{E}$ as the new set of directed edges of $G$ (the cardinality of $\mathbb{E}$ is twice that of $E$), denoting by $\bar{e}\in \mathbb{E}$ the inverse of the directed edge $\bar{e}\in \mathbb{E}$ and requiring $f(\bar{e})=-f(e)$, \cite{BHN}, \cite{Sierre}.

\item The graph $G$ is undirected. The value of a function $f$ on an edge $e$ does not depend on the direction of the travel on $e$. The case of balanced functions $f:E\rightarrow \mathbb{R}$ is studied in~\cite{OI}, where these functions are called ``cycle-vanishing edge valuations". The case of balanced functions $f:E\rightarrow A$ is studied in~\cite{CGL}. The case of balanced functions $f:V\bigcup E\rightarrow A$ is first introduced and studied in~\cite{NI} and then is studied in~\cite{CGL}.
\end{enumerate}

The subject of this paper is the group structure and the relations between groups of functions, associated with the notion of balance, on a directed graph. Namely, we study the group structures of the groups of balanced functions for the flexible and the rigid cases and the relations between these two cases.

In this article we calculate the groups of balanced functions on edges, balanceable functions on vertices and balanced functions on vertices and edges of a directed graph with values in an Abelian group for both flexible and rigid cases.

Note that if we take functions with values in a non-Abelian group then the sets of the balanced function on edges, balanceable functions on vertices and balanced functions on vertices and edges do not inherit any natural group structure. However our methods still enable to find the number of these functions when the non-Abelian group is finite.

In what follows we say that a directed graph is weakly connected if its underlying undirected graph is connected.

For the basics of Graph Theory we refer to \cite{Diestel}.
\section{The flexible case.}
Let $G=(V,E)$ be a weakly connected directed graph, possibly with loops and multiple edges. Let $v,w\in V$ be two vertices connected by an edge $e$; $v$ is the origin of $e$ and $w$ is the endpoint of $e$. For $e\in E$ denote by $\bar{e}$ the same edge as $e$ but taken in the opposite direction. Thus $\bar{e}$ goes from $w$ to $v$. Let $\mathbb E=\left\{e,\bar{e}\,|\,e\in E\right\}$.
\bde\label{pathdef} A path $P$ from a vertex $x$ to a vertex $y$ is an alternating sequence $v_1,e_1,v_2,e_2,...,v_{n},e_{n}$ of vertices from $V$ and different edges from $\mathbb E$ such that $v_1=x$ and each $e_j$, for $j=1,...,n-1$, goes from $v_j$ to $v_{j+1}$ and $e_n$ goes from $v_n$ to $y$. We permit the same edge $e$ to appear in a path twice - one time along and one time against its direction, since this is regarded as using two different edges from $\mathbb E$.
\ede
We require our graphs to be weakly connected. Namely, any two different vertices of our graph $G$ can be connected by a path.
\bde A path $P$ from a vertex $x$ to itself is called a cycle.
\ede
We permit the trivial cycle, which is the empty sequence containing no vertices and no edges.
\bde A cycle is called simple if it contains every vertex at most one time. In other words $v_i\ne v_j$ if $i\ne j$.
\ede

\bde The length of a cycle is the number of its edges.
\ede

\bde A function $f:\mathbb E\rightarrow A$ such that $f(\bar e)=-f(e)$ is called {\it balanced} if the sum $f(e_1)+...+f(e_n)$ of the values of $f$ over all the edges of any cycle of $G$ is equal to $0$.
\ede
\bde The set of all the balanced functions $f:\mathbb E\rightarrow A$ is denoted by $\mathcal{HF}(\mathbb E,A)$. $\mathcal{HF}(\mathbb E,A)$ is a subgroup of the Abelian group $A^{\mathbb E}$ of all the functions from $\mathbb E$ to $A$.
\ede
\bde A function $g:V\rightarrow A$ is called {\it balanceable} if exists some $f:\mathbb E\rightarrow A$ such that $f(\bar e)=-f(e)$ and the sum of all the values $g(v_1)+f(e_1)+g(v_2)+f(e_2)+...+g(v_n)+f(e_n)$ along any cycle of $G$ is zero. We say that this function $f:\mathbb E\rightarrow A$ balances the function $g:V\rightarrow A$.
\ede
\bde The set of all the balanceable functions $g:V\rightarrow A$ is denoted by $\mathcal{BF}(V,A)$. The group $\mathcal{BF}(V,A)$ is a subgroup of the free Abelian group $A^V$ of all the functions from $V$ to $A$.
\ede
\bde A function $h:V\bigcup\mathbb E\rightarrow A$, which takes both vertices and edges of $G$ to some elements of $A$, is called balanced if $h(\bar e)=-h(e)$ and the sum of its values $h(v_1)+h(e_1)+h(v_2)+h(e_2)+...+h(v_n)+h(e_n)$ along any cycle of $G$ is zero.
\ede
\bde The set of all the balanced functions $h:V\bigcup\mathbb E\rightarrow A$ is denoted by $\mathcal{WF}(G,A)$. The group $\mathcal{WF}(G,A)$ is a subgroup of the Abelian group $A^{V\bigcup\mathbb E}$ of all the functions from $V\bigcup\mathbb E$ to $A$.
\ede
Clearly, any balanced function $f\in\mathcal{HF}(\mathbb E,A)$ can be viewed as a balanced function from $V\bigcup\mathbb E$ to $A$ which takes zero value on every vertex of $G$. Thus, we will regard $\mathcal{HF}(\mathbb E,A)$ as a subgroup of $\mathcal{WF}(V\bigcup\mathbb E,A)$.
\bpr The quotient $\mathcal{WF}(V\bigcup\mathbb E,A)/\mathcal{HF}(\mathbb E,A)$ is naturally isomorphic to $\mathcal{BF}(V,A)$.
\epr
\begin{proof} The natural isomorphism is defined by ``forgetting" the values of $h\in \mathcal{WF}(V\bigcup\mathbb E,A)$ on the edges of $G$ and regarding it just as a balanceable function from $V$ to $A$.
\end{proof}
We review some basic definitions and facts regarding Abelian groups.
\bde The order $ord(a)$ of an element $a\in A$ is the minimal positive number such that $ord(a)a=0$. If no such positive number exist we say that $ord(a)=\infty$.
\ede
\bde The set of all elements of $A$ of order $2$ is denoted by $A_2$. $A_2$ is a subgroup of $A$.
\ede
%\bde The image of the doubling map from $A$ to itself, which multiplies every element of $A$ by $2$, is a subgroup of $A$ denoted by $2A$.
%\ede
%Notice that $A_2$ is the kernel of the doubling map.
%\\ \\
The group $\mathcal{HF}(\mathbb E,A)$ is well understood and the following fact is well known.
\bpr\label{HFL}
The group $\mathcal{HF}(\mathbb E,A)$ is isomorphic to $A^{|V|-1}$.
\epr
\begin{proof}
Select a vertex $v$ and consider the following bijection between the group of all $A$-valued functions $g$ on $V$ such that $g(v)=0$ and the group $\mathcal{HF}(\mathbb E,A)$. For any such $g$, since each edge $e\in \mathbb E$ goes from some vertex $x$ to some vertex $y$, we define $f(e)=g(y)-g(x)$. A straightforward calculation shows that $f\in\mathcal{HF}(\mathbb E,A)$. In the other direction of the bijection, for $f\in\mathcal{HF}(\mathbb E,A)$ we inductively construct the function $g$ as follows: we set $g(v)=0$; if $g(u)$ has been defined for a vertex $u$ then for every vertex $w$, for which exists some edge $e$ from $u$ to $w$, we define $g(w)=g(u)+f(e)$. Since $f\in\mathcal{HF}(\mathbb E,A)$, any two calculations of the value of $g$ on any vertex $u$ will produce the same result. Thus, our $g$ is well-defined. Obviously, the bijection, constructed above, is a group isomorphism.
\end{proof}
Now we can state and prove one of our main results.
\begin{thm}
Let $G=(V,E)$ be a weakly connected directed graph and $G'$ be its underlying undirected graph. Then:
\begin{enumerate}
\item If $G'$ is bipartite, then the group $\mathcal{WF}(V\bigcup\mathbb E,A)$ is isomorphic to $A^{|V|}$.
\item If $G'$ is not bipartite, then $\mathcal{WF}(V\bigcup\mathbb E,A)$ is isomorphic to $A_2\times A^{|V|-1}$.
\end{enumerate}
\end{thm}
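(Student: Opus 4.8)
The plan is to reduce the computation of $\mathcal{WF}(V\bigcup\mathbb E,A)$ to that of $\mathcal{BF}(V,A)$, using the short exact sequence
$0\to\mathcal{HF}(\mathbb E,A)\to\mathcal{WF}(V\bigcup\mathbb E,A)\xrightarrow{\ \rho\ }\mathcal{BF}(V,A)\to0$
provided by the preceding proposition (here $\rho$ forgets the values on the edges) together with Proposition~\ref{HFL}, which identifies $\mathcal{HF}(\mathbb E,A)$ with $A^{|V|-1}$. Thus it suffices to (i) show that $\mathcal{BF}(V,A)\cong A$ when $G'$ is bipartite and $\mathcal{BF}(V,A)\cong A_2$ otherwise, and (ii) exhibit a homomorphic section of $\rho$, so that the sequence splits and $\mathcal{WF}(V\bigcup\mathbb E,A)\cong\mathcal{HF}(\mathbb E,A)\oplus\mathcal{BF}(V,A)$.

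For step (i) the starting observation is that for any edge $e$ from a vertex $v$ to a vertex $w$ the closed walk $v,e,w,\bar e,v$ is a cycle, and evaluating a balanceable $g$ with a balancing $f$ along it gives $g(v)+f(e)+g(w)+f(\bar e)=g(v)+g(w)=0$, since $f(\bar e)=-f(e)$. Hence every balanceable $g$ satisfies $g(v)+g(w)=0$ on each edge of $G'$. If $G'$ is bipartite, with (unique, $G'$ being connected) bipartition $V=X\sqcup Y$, then propagating this relation along paths shows that $g$ equals some $a\in A$ on $X$ and $-a$ on $Y$; conversely every such $g$ is balanceable by step (ii) below, so $\mathcal{BF}(V,A)\cong A$. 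If $G'$ is not bipartite it contains a simple cycle of odd length (a loop being one of length $1$), and running the relation $g(v)=-g(w)$ around it forces $2g(u)=0$ for some vertex $u$; since $-g=g$ on $A_2$ and $G'$ is connected, $g$ is then a constant $c\in A_2$, and again such $g$ is balanceable, so $\mathcal{BF}(V,A)\cong A_2$.

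Step (ii) also supplies the required sections. When $G'$ is bipartite, every cycle of $G$ has even length $n$ and its vertices alternate between $X$ and $Y$ (even when vertices are revisited), so for $g_a$ equal to $a$ on $X$ and $-a$ on $Y$ the sum of $g_a$ over the vertices of the cycle is $\tfrac n2 a-\tfrac n2 a=0$; hence $g_a$, extended by $0$ on $\mathbb E$, lies in $\mathcal{WF}(V\bigcup\mathbb E,A)$, and $a\mapsto g_a$ is a homomorphic section of $\rho$, giving $\mathcal{WF}(V\bigcup\mathbb E,A)\cong A\oplus A^{|V|-1}\cong A^{|V|}$. When $G'$ is not bipartite, for $c\in A_2$ let $\tilde g_c$ take the value $c$ on every vertex and the value $c$ on every edge of $\mathbb E$; then $\tilde g_c(\bar e)=c=-c=-\tilde g_c(e)$, and along any cycle of length $n$ its total is $nc+nc=2nc=0$, so $\tilde g_c\in\mathcal{WF}(V\bigcup\mathbb E,A)$ and $\rho(\tilde g_c)$ is the constant $c$. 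Thus $c\mapsto\tilde g_c$ is a homomorphic section and $\mathcal{WF}(V\bigcup\mathbb E,A)\cong A_2\oplus A^{|V|-1}$.

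I expect the only genuinely delicate point to be the sufficiency in step (ii), i.e.\ checking the proposed $f$ (namely $f\equiv0$, resp.\ $f\equiv c$) against \emph{all} cycles rather than just simple ones — in particular that the alternation of a bipartite cycle's vertices, and hence the cancellation of the $g_a$-sum, persists when the cycle revisits vertices — together with bookkeeping for the degenerate situations (loops, multiple edges, and $|V|=1$), in which the asserted isomorphisms should be verified directly.
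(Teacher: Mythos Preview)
Your proof is correct and follows the same strategy as the paper: use the back-and-forth cycle along a single edge to force adjacent vertices to carry opposite values, deduce the structure of $\mathcal{BF}(V,A)$ in the bipartite and non-bipartite cases, and then split $\mathcal{WF}$ as a product of $\mathcal{HF}(\mathbb E,A)\cong A^{|V|-1}$ with the vertex part. Your exact-sequence packaging and the explicit section $\tilde g_c$ in the non-bipartite case (putting $c$ on the edges as well as on the vertices) are in fact tidier than the paper's treatment, which asserts the bijection with $\{(a,f):a\in A_2,\ f\in\mathcal{HF}(\mathbb E,A)\}$ without writing it down---note that the naive map $h\mapsto(h(v),h|_{\mathbb E})$ does \emph{not} land in $\mathcal{HF}$ when odd cycles are present, so some correction like yours is needed.
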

\begin{proof}
If $G$ consists only of one vertex then the (1) part of our theorem is trivial. Otherwise, let us look at any one non-loop edge of $G$:
%\begin{figure}
 $$\xygraph{
!{<0cm,0cm>;<1cm,0cm>:<0cm,1cm>::}
!{(0,1) }*+{a}="1"
!{(3,1) }*+{b}="2"
"1":"2"^p  }$$
%\caption{}
%\end{figure}
The letters on the edge and the vertices denote the values of a function $h:V\bigcup\mathbb E\rightarrow A$. Assume that $h$ is balanced, i.e. $h\in\mathcal{WF}(V\bigcup\mathbb E,A)$. Then for the cycle obtained by walking along this edge and returning back along it we have the following equation:
$$
a+p+b-p=0\,\,,
$$
which immediately implies that
$$
b=-a\,.
$$
Thus $h$ must have opposite values on any two vertices of $G$ connected by an edge. Assume that $G'$ is bipartite, which implies that $G$ has no cycles of odd length. Then $h$, restricted to the edges, must be equal to some balanced function $f\in \mathcal{HF}(\mathbb E,A)$ on the edges. Now select any vertex $v\in V$. We can construct a balanced function $h$ on vertices and edges by: for any element $a\in A$ define $h(v)=a$ and then define $h$ for all the neighbors of $v$ to be $-a$ and then for all the neighbors of the neighbors of $v$ define $h$ to be $a$ and so on. Continuing this way we will assign values $a$ or $-a$ to all the vertices of $G$. Since all the cycles are of even length, we will not get a contradiction in that process. Next we choose any function $f\in \mathcal{HF}(\mathbb E,A)$ and we set $h$ on the edges to be equal to $f$. Hence, we constructed a bijection between $\mathcal{WF}(V\bigcup\mathbb E,A)$ and the group of pairs $\left\{(a,f)\,|\,a\in A,\,f\in \mathcal{HF}(\mathbb E,A)\right\}$. This bijection is obviously also a group isomorphism. And $\left\{(a,f)\,|\,a\in A,\,f\in \mathcal{HF}(\mathbb E,A)\right\}$ is isomorphic to $A^{|V|}$, since the group $\mathcal{HF}(\mathbb E,A)$ is isomorphic to $A^{|V|-1}$ by Proposition~\ref{HFL}.

Now assume that $G'$ is not bipartite, i.e., $G$ has a cycle of odd length. As we have already seen above, the values of a balanced function $h\in\mathcal{WF}(V\bigcup\mathbb E,A)$ on the vertices must be $a$ and $-a$ for some $a\in A$. But walking along a cycle of odd length greater than 1 we get that $a=-a$, i.e. $2a=0$, which exactly means that $a\in A_2$. Thus we construct a bijection between $\mathcal{WF}(V\bigcup\mathbb E,A)$ and the group of pairs $\left\{(a,f)\,|\,a\in A_2,\,f\in \mathcal{HF}(\mathbb E,A)\right\}$. This bijection is a group isomorphism. And $\left\{(a,f)\,|\,a\in A_2,\,f\in \mathcal{HF}(\mathbb E,A)\right\}$ is isomorphic to $A_2\times A^{|V|-1}$, since the group $\mathcal{HF}(\mathbb E,A)$ is isomorphic to $A^{|V|-1}$ by Proposition~\ref{HFL}. Note that if $G$ has a loop edge $e$ from a vertex $v$ to itself then both $h(v)+h(e)$ and $h(v)-h(e)$ must be $0$ since we can travel on $e$ in both directions. This implies that $h(v)=h(e)$ and $2h(v)=0$.
\end{proof}
\begin{rem} Let $G=(V,E)$ be a weakly connected directed graph and $G'$ be its underlying undirected graph. Notice that if the graph $G'$ is bipartite, then the group of balanceable functions $\mathcal{BF}(V,A)$ is isomorphic to $A$ and if $G'$ is not bipartite, then the group of balanceable functions $\mathcal{BF}(V,A)$ is isomorphic to $A_2$ - the group of involutions of $A$.
\end{rem}
\section{The rigid case.}
Let $G=(V,E)$ be a weakly connected directed graph. Recall that in this case we are allowed to walk only in the direction of an edge but not against it. It naturally changes the notion of a path and of a cycle in comparison with the flexible case.
\bde\label{pathdef} A path $P$ from a vertex $x$ to a vertex $y$ is an alternating sequence $v_1,e_1,v_2,e_2,...,v_{n},e_{n}$ of vertices from $V$ and different edges from $E$ (and not $\mathbb E$) such that $v_1=x$ and each $e_j$, for $j=1,...,n-1$, goes from $v_j$ to $v_{j+1}$ and $e_n$ goes from $v_n$ to $y$.
\ede
For example, this triangle
$$\xygraph{
!{<0cm,0cm>;<1cm,0cm>:<0cm,1cm>::}
!{(0,1) }*+{\bullet}="1"
!{(4,1) }*+{\bullet}="2"
!{(2,3) }*+{\bullet}="3"
"1":"2" "2":"3" "1":"3"}$$
is a cycle in the flexible case but not in the rigid case.

Similarly to the flexible case denote by $\mathcal{BR}(V,A)$, $\mathcal{HR}(E,A)$ and $\mathcal{WR}(V\bigcup E,A)$ the groups of balanceable functions on vertices, balanced functions on edges and balanced functions of the entire graph $G$ (vertices and edges), respectively.
\bpr
Any function on  the set of vertices is balanceable. I.e., $\mathcal{BR}(V,A)=A^V$.
\epr
\begin{proof} Let  us take a function on vertices $g:V\rightarrow A$. Define the function $h:V\bigcup E\rightarrow A$ in the following way: $h(v)=g(v)$ for any vertex $v\in V$, $h(e)=-g(v)$ for all the edges $e\in E$ which start at $v$. Obviously $h$ is a balanced function.
\end{proof}
\bde Two vertices $x$ and $y$ of $G$ are called strongly connected if exists a path $P_1$ from $x$ to $y$ and a path $P_2$ from $y$ to $x$. We also say that every vertex is strongly connected to itself.
\ede
Note: We did not require that $P_1$ and $P_2$ do not have common edges.
\bde A cycle is a path $P$ from a vertex $x$ to itself. We denote the set of all the cycles of $G$ by $C(G)$.
\ede
Note: Since the above-mentioned $P_1$ and $P_2$ paths might have common edges, $P_1$ followed by $P_2$ might not be a cycle. There could even not exist any cycle, containing both $x$ and $y$. Consider the following example:
\begin{exa} Let $V(G)=\{x,v,w,y\}$ and $E(G)=\{e_1,e_2,e_3,e_4,e_5\}$ where $e_1=(x,v)$, $e_2=(y,v)$, $e_3=(w,x)$, $e_4=(w,y)$ and $e_5=(v,w)$.% as depicted in Figure~\ref{F1}.
 The path $P_1=e_1,e_5,e_4$ is the only path which goes from $x$ to $y$ and the path $P_2=e_2,e_5,e_3$ is the only path which goes from $y$ to $x$. They have a common edge $e_5$. Thus, there exist no cycle, containing both $x$ and $y$.
%\begin{figure}[htbp]\label{F1}
$$\xygraph{
!{<0cm,0cm>;<1.2cm,0cm>:<0cm,1.2cm>::}
!{(1,1) }*+{x}="x"
!{(1,3) }*+{v}="v"
!{(3,3) }*+{y}="y"
!{(3,1)}*+{w}="w"
"x":"v"^{e_1}   "v":"w"^{e_5}
"w":"y"_{e_4}   "w":"x"^{e_3}  "y":"v"_{e_2}
}$$
%\caption{}
%\end{figure}
\end{exa}
Strong connectivity defines an equivalence relation on the vertices of $G$. The equivalence classes of strongly connected vertices, together with all the edges between the vertices of each class, are called the strongly connected components of $G$. We denote the number of strongly connected components of $G$ by $\bar{k}(G)$.
\begin{lem}\label{HR} If $\bar{k}(G)=1$ then the group $\mathcal{HR}(E,A)$ is isomorphic to $A^{|V|-1}$ just like in the flexible case.
\end{lem}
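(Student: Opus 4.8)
The plan is to follow the proof of Proposition~\ref{HFL} and show that, also in the rigid case, a function $f\colon E\to A$ is balanced if and only if it is a ``coboundary'', i.e. $f(e)=g(y)-g(x)$ for some $g\colon V\to A$, where $e$ runs from $x$ to $y$. The new difficulty compared with the flexible case is that we may no longer traverse an edge against its direction, so two paths between the same pair of vertices cannot be combined into a cycle by reversing one of them; this is exactly the point at which the hypothesis $\bar k(G)=1$ is needed.

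First I would dispose of the easy half. For any $g\colon V\to A$ the function $f_g$ given by $f_g(e)=g(y)-g(x)$ (for $e$ from $x$ to $y$) lies in $\mathcal{HR}(E,A)$, since along any cycle the sum telescopes to $0$; and $f_g=f_{g'}$ precisely when $g-g'$ is constant on $V$ (here weak connectivity, which follows from $\bar k(G)=1$, is used). Hence, fixing a base vertex $v_0$, the group of coboundaries is isomorphic to $\{g\colon V\to A\mid g(v_0)=0\}\cong A^{|V|-1}$. For the converse I would first note that if $f$ is balanced then its sum along any closed walk — a cyclic edge sequence traversed in the edges' directions, with repetitions allowed — is $0$, because every closed walk decomposes into simple cycles and $f$ sums to $0$ on each simple cycle by the definition of balance. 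Then, using that $G$ is strongly connected, for each vertex $u$ I pick a path $P$ from $v_0$ to $u$ and set $g(u)=\sum_{e\in P}f(e)$ (and $g(v_0)=0$). This is well defined: given two such paths $P_1,P_2$, strong connectivity also provides a path $Q$ from $u$ back to $v_0$, and since the concatenations of $P_1$ with $Q$ and of $P_2$ with $Q$ are closed walks we get $\sum_{P_1}f+\sum_Q f=0=\sum_{P_2}f+\sum_Q f$, hence $\sum_{P_1}f=\sum_{P_2}f$. Finally, for an edge $e$ from $x$ to $y$, appending $e$ to a path from $v_0$ to $x$ gives $g(y)=g(x)+f(e)$, so $f=f_g$.

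To conclude, $f\mapsto g$ and $g\mapsto f_g$ are mutually inverse group homomorphisms between $\mathcal{HR}(E,A)$ and $\{g\colon V\to A\mid g(v_0)=0\}$, so $\mathcal{HR}(E,A)\cong A^{|V|-1}$, as claimed. I expect the well-definedness of $g$ to be the only real obstacle: it is what forces us to produce the return path $Q$, and hence what makes strong connectivity essential — as the example preceding the lemma shows, without it $P_1$ and $P_2$ need not close up into any cycle at all, so the argument genuinely breaks when $\bar k(G)>1$.
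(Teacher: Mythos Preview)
Your argument is correct and follows a genuinely different route from the paper. The paper does not build the potential $g$ directly; instead it takes the edges $e_1,\dots,e_n$ one at a time, uses strong connectivity to find a directed path $P$ from the head of $e_i$ back to its tail avoiding $e_i$, and observes that adjoining a formal reverse edge $\bar e_i$ with value $-f(e_i)$ neither creates nor destroys balanced functions (since $f(\bar e_i)$ is forced by the cycle $e_iP$). After all edges have been doubled one is literally in the flexible setting and can invoke Proposition~\ref{HFL}. Your approach is the more ``cohomological'' one: you show directly that $\mathcal{HR}(E,A)$ coincides with the coboundaries, using strong connectivity only to produce the return path $Q$ that makes $g$ well defined. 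This is arguably cleaner and more self-contained, while the paper's reduction has the virtue of making the phrase ``just like in the flexible case'' literal. One small point to tighten: when you write ``appending $e$ to a path from $v_0$ to $x$ gives $g(y)=g(x)+f(e)$'', the concatenation need not be a \emph{path} (the edge $e$ might already occur), only a walk; but your closed-walk argument already shows that the $f$-sum along \emph{any} walk from $v_0$ to $u$ equals $g(u)$, so the conclusion stands once you say this explicitly.
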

\begin{proof} Let $E=\{e_1,...,e_n\}$. Edge $e_1$ goes from some $x$ to some $y$. There is a path $P$ which goes from $y$ to $x$ and does not contain $e_1$, since if $P$ contains $e_1$ we can just delete this $e_1$ and all the vertices and edges, which come after it, from $P$. Thus, the sum of values of $f\in\mathcal{HR}(E,A)$ along $P$ must be equal to $-f(e_1)$. Hence we can add a new edge $\bar{e}_1$ to $G$ which goes from $y$ to $x$ and we can extend the function $f$ to a balanced function on edges of the new $G$ if and only if we set $f(\bar{e}_1)=-f(e_1)$. So the group of the balanced functions on edges of $G$ after the addition of $\bar{e}_1$ is naturally isomorphic to the original group of the balanced functions on edges of $G$ before the addition. Repeating this process for all the edges of $E$ we reduce $G$ to the flexible case, while not changing the group of the balanced functions on edges of $G$.
\end{proof}
\begin{thm}\label{directed} The group $\mathcal{HR}(E,A)$ is isomorphic to $A^{|V|-\bar{k}(G)+r(G)}$, where $r(G)$ is the number of all the edges in $G$ which go from a vertex in one strongly connected component of $G$ to a vertex in a different strongly connected component of $G$.
\end{thm}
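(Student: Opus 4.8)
The plan is to split $G$ along its strongly connected components and then apply Lemma~\ref{HR} to each component. Write $G_1=(V_1,E_1),\dots,G_{\bar k}=(V_{\bar k},E_{\bar k})$ for the strongly connected components of $G$, where $\bar k=\bar k(G)$ and $E_i$ consists of all edges of $G$ both of whose endpoints lie in $V_i$. Let $E_0\subseteq E$ be the set of the remaining edges, each going from a vertex of one component to a vertex of a different component, so $r(G)=|E_0|$, $E=E_0\sqcup E_1\sqcup\cdots\sqcup E_{\bar k}$, and $|V|=\sum_{i=1}^{\bar k}|V_i|$.

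The first step, which I expect to be the only one needing care, is to show that every cycle of $G$ (in the rigid sense) is contained in a single component. Indeed, if $v_1,e_1,v_2,\dots,e_n,v_1$ is a cycle, then for any two of the vertices occurring on it there is a directed path from one to the other along the cycle and one back along the cycle, so all these vertices are strongly connected to each other and hence lie in one component $V_i$; consequently each $e_j$ lies in $E_i$. In particular, no edge of $E_0$ ever occurs in a cycle of $G$, and the cycles of $G$ are precisely the cycles of the subgraphs $G_i$. It follows that a function $f\colon E\to A$ is balanced, i.e.\ $f\in\mathcal{HR}(E,A)$, if and only if $f|_{E_i}$ is a balanced function on the edges of $G_i$ for every $i$, while $f|_{E_0}$ is subject to no constraint at all. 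Hence the map $f\mapsto\bigl(f|_{E_0},f|_{E_1},\dots,f|_{E_{\bar k}}\bigr)$ restricts to a group isomorphism
$$
\mathcal{HR}(E,A)\;\cong\;A^{E_0}\times\prod_{i=1}^{\bar k}\mathcal{HR}(E_i,A).
$$

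To finish, observe that each $G_i$ is strongly connected, so $\bar k(G_i)=1$ and $G_i$ is in particular weakly connected; Lemma~\ref{HR} then yields $\mathcal{HR}(E_i,A)\cong A^{|V_i|-1}$. Since $|A^{E_0}|$ contributes the free factor $A^{r(G)}$, we obtain
$$
\mathcal{HR}(E,A)\;\cong\;A^{r(G)}\times A^{\sum_{i=1}^{\bar k}(|V_i|-1)}\;=\;A^{\,|V|-\bar k(G)+r(G)},
$$
using $\sum_i|V_i|=|V|$ and $\sum_i 1=\bar k(G)$. The degenerate cases are absorbed automatically: a component consisting of a single vertex (possibly with loops) contributes $A^{0}$ by Lemma~\ref{HR}, which is consistent since any loop is a cycle and so is forced to take the value $0$.
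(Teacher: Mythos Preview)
Your argument is correct and follows essentially the same route as the paper: decompose $E$ into the edge sets $E_i$ of the strongly connected components together with the set of inter-component edges, observe that balancedness factors accordingly, and then apply Lemma~\ref{HR} to each component. Your write-up is in fact more complete than the paper's, since you explicitly justify why every rigid cycle lies inside a single strongly connected component (and hence why inter-component edges are unconstrained), a point the paper's proof uses without comment.
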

\begin{proof} Let $V_1,...V_t$ be the equivalence classes of vertices $G$ and denote the set of edges between the vertices of $V_j$ by $E_j$. Then $\mathcal{HR}(E,A)=\mathcal{HR}(E_1,A)\times\cdots\times \mathcal{HR}(E_t,A)\times A^{U}$, where $U$ is the set of all the edges between the vertices in different strongly connected components of $G$. By Lemma~\ref{HR} we conclude that $\mathcal{HR}(E,A)$ is isomorphic to $A^{|V_1|-1+|V_2|-1+\cdots+|V_t|-1+r(G)}=A^{|V|-\bar{k}(G)+r(G)}$.
\end{proof}
\begin{thm}
$\mathcal{WR}(V\bigcup E,A)$ is isomorphic to $A^{|V|}\times  A^{|V|-\bar{k}(G)+r(G)}$
\end{thm}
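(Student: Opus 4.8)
The plan is to produce an explicit group isomorphism $\mathcal{WR}(V\bigcup E,A)\cong A^V\times\mathcal{HR}(E,A)$ and then to invoke Theorem~\ref{directed} together with the identification $A^V\cong A^{|V|}$. The guiding idea is the bookkeeping trick already used to prove $\mathcal{BR}(V,A)=A^V$: in a rigid cycle every edge is traversed from its origin to its endpoint, so the value of a balanced function on a vertex can be absorbed into an edge leaving that vertex without disturbing any cyclic sum.

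Concretely, I would send $h\in\mathcal{WR}(V\bigcup E,A)$ to the pair $(g,f)$, where $g=h|_V$ is the restriction of $h$ to the vertices and $f\colon E\to A$ is defined by $f(e)=h(e)+h(v)$ with $v$ the origin of $e$. For any cycle $v_1,e_1,\dots,v_n,e_n$ of $G$ (indices read cyclically, each $e_i$ going from $v_i$ to $v_{i+1}$) one gets $\sum_i f(e_i)=\sum_i\bigl(h(v_i)+h(e_i)\bigr)=0$, so $f\in\mathcal{HR}(E,A)$. For the inverse, given an arbitrary $g\colon V\to A$ and any $f\in\mathcal{HR}(E,A)$, define $h$ by $h|_V=g$ and $h(e)=f(e)-g(v)$ for $v$ the origin of $e$; then along any cycle $\sum_i\bigl(h(v_i)+h(e_i)\bigr)=\sum_i\bigl(g(v_i)+f(e_i)-g(v_i)\bigr)=\sum_i f(e_i)=0$, so $h\in\mathcal{WR}(V\bigcup E,A)$. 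These two assignments are mutually inverse, and each is additive in its argument(s), so the first is a group isomorphism onto $A^V\times\mathcal{HR}(E,A)$. Combining this with Theorem~\ref{directed}, which gives $\mathcal{HR}(E,A)\cong A^{|V|-\bar{k}(G)+r(G)}$, yields the claim.

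I do not expect a real obstacle here; the only points requiring care are keeping the orientation straight (in the rigid setting every edge of a cycle is used in its forward direction, so ``the origin of $e_i$'' is unambiguously the vertex $v_i$ immediately preceding it, which is exactly what legitimizes the rearrangement $\sum_i(h(e_i)+h(v_i))=\sum_i(h(v_i)+h(e_i))$) and checking that no constraint is forced on $g$, in contrast to the flexible case, where a non-bipartite underlying graph confined the vertex values to $A_2$. Loops are consistent with this: a loop $e$ at $v$ forces $h(e)=-h(v)$ and hence $f(e)=0$, in agreement with the exponent count in Theorem~\ref{directed}.
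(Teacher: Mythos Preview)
Your proof is correct and follows essentially the same approach as the paper: both construct the isomorphism $\mathcal{WR}(V\cup E,A)\cong A^V\times\mathcal{HR}(E,A)$ via $h\mapsto(h|_V,\,e\mapsto h(e)+h(\text{origin of }e))$ with the explicit inverse $h(e)=f(e)-g(\text{origin of }e)$, and then invoke Theorem~\ref{directed}. Your write-up is in fact slightly more careful than the paper's, as you explicitly verify the cycle-sum computations on both sides.
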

\begin{proof}
To every $h\in\mathcal{WR}(V\bigcup E,A)$ corresponds the pair $(g,f)$, where $g\in \mathcal{BR}(E,A)$ is just the restriction of $h$ on vertices, and the value of $f\in \mathcal{HR}(E,A)$ on every edge $e$ is equal to $h(e)+h(v)$, where the vertex $v$ the origin of the edge $g$. Such a function $h$ is obviously a balanced function on edges since its value along any path is equal to the value of $h$ along that path. This correspondence between the elements of $\mathcal{WR}(V\bigcup E,A)$ and the pairs from $\mathcal{BR}(E,A)\times \mathcal{HR}(E,A)$ is a bijection. Indeed, for a given pair $(g,f)$, where $g$ is any function on vertices and $f$ is a balanced function on edges, we can construct $h:V\bigcup E\rightarrow A$ as follows: $h(v)=g(v)$ for all $v\in V$ and $h(e)=f(e)-g(v)$ for all $e\in E$, where the vertex $v$ is the origin of the edge $e$. The constructed bijection obviously is a group isomorphism.
\end{proof}
Thus, the flexible problem for a graph $G=(V, E)$ can be regarded as the rigid problem for the graph $G'=(V, \mathbb E)$. Vice versa, the rigid problem for a graph $G$ can be regarded as a free product of the rigid problems for the strongly connected components of $G$ also multiplied by $A^{r(G)}$ where $r(G)$ is the number of edges between different strongly connected components of $G$.

We finish this paper with the following simple claim, which connects this work to~\cite{CGL}.
\bpr Let $G$ be an undirected connected graph and let $G_{dir}$ be a directed graph obtained from $G$ by any assigning of directions to the edges of $G$. Denote by $H(E,A)$ the group of $A$-valued balanced functions on edges of $G$. Choose any order on edges of $G$ and embed $H(E,A)$ and $\mathcal{HR}(E(G_{dir}),A)$ into $A^{|E|}$. For an undirected graph $G$ the group of balanced functions on edges of $G$ is equal to the intersection of all the groups $\mathcal{HR}(E(G_{dir}),A)$, where $G_{dir}$ runs over all directed graphs for all $2^{|E|}$ possible direction assignments to the edges of $G$. The same is true for the groups of balanced functions on the entire graph (both vertices and edges). Namely, $W(V\bigcup E,A)=\bigcap\mathcal{WR}(V\bigcup E(G_{dir}),A)$.
\epr
\begin{proof}
Let $Cyc=v_1,e_1,...,v_k,e_k$ be a cycle in the undirected graph $G$. There exists a directed graph $G_{dir}$ for which $c$ is also a cycle. So any $f\in \bigcap\mathcal{HR}(E(G_{dir}),A)$ must satisfy the equation $\sum_{i=1}^kf\left(e_i\right)=0$. Therefore $f\in H(E,A)$, since $Cyc$ is an arbitrary cycle of $G$. Hence,
$$H(E,A)\supseteq\bigcap\mathcal{HR}(E(G_{dir}),A)\,.$$ The opposite inclusion is obvious, since any cycle of any $G_{dir}$ is a cycle of $G$. The proof of the second statement of the proposition is the same.
\end{proof}

\end{document}